\newtheorem{Assumption}{Assumption}
\newtheorem{Theorem}{Theorem}
\newtheorem{Remark}{Remark}
\newtheorem{Corollary}{Corollary}
\newtheorem{Problem}{Problem}
\newcommand*{\rom}[1]{\expandafter\@slowromancap\romannumeral #1@}
\begin{document}

\title{Optimal Distributed  Control  for  Leader-Follower Networks: A Scalable Design}
\author{Jalal Arabneydi, Mohammad M. Baharloo, and Amir G. Aghdam
\thanks{ This work has been supported in part by the Natural Sciences and Engineering Research Council of Canada (NSERC) under Grant RGPIN-262127-17, and in part by Concordia University under Horizon Postdoctoral Fellowship.}  
\thanks{Jalal Arabneydi, Mohammad M. Baharloo, and Amir G. Aghdam are with the  Department of Electrical and Computer Engineering, 
        Concordia University, 1455 de Maisonneuve Blvd. West, Montreal, QC, Canada, Postal Code: H3G 1M8.  Email:{\small jalal.arabneydi@mail.mcgill.ca},
         {\small baharloo@ieee.org}, and 
        {\small aghdam@ece.concordia.ca.}}%
}

\markboth{}%
{Arabneydi \MakeLowercase{\textit{et al.}}: }

\maketitle

\vspace*{-5.2cm}{\footnotesize{Proceedings of IEEE Canadian Conf. on Elec. and Comp. Eng., 2018.}}
\vspace*{4.45cm}

 \begin{abstract}
The focus of this paper is  directed towards optimal control  of multi-agent  systems consisting of one leader and a  number of followers in the presence of noise.  The dynamics  of every agent is assumed to be  linear,  and the performance index is a quadratic function of the states and actions of the leader and followers. The leader and followers  are coupled in both dynamics and cost.  The state of the leader and the average of the states of all  followers (called mean-field)  are common information and  known to all agents; however,  the local state of  the followers are private information and unknown to  other agents.
It is shown that the optimal distributed control  strategy is linear time-varying,    and its computational complexity is independent of the number of followers.   This strategy can be computed in a distributed manner, where the leader needs to solve one Riccati equation to determine  its optimal strategy while  each follower  needs to solve  two Riccati equations to obtain its optimal strategy. 
  This result is  subsequently extended to the case of the  infinite horizon  discounted and undiscounted cost functions, where the optimal distributed strategy is shown to be stationary.  A numerical example with $100$ followers is provided to demonstrate the efficacy of the results.
\end{abstract}

\section{Introduction}
There has been a growing interest in networked control systems in recent years, due to their applications in emerging areas such as control of a platoon of autonomous vehicles, environmental monitoring using sensor networks, and surveillance using a team of UAVs \cite{Fax2004,rawat2014wireless,UAVsurvey2016}. In particular, in the control of multi-agent systems, every agent exchanges some information with a subset of agents in order to properly coordinate its position and movement such that a global objective is achieved. To this end, each agent requires some computational effort in order to compute its control action based on the information available to it.

The leader-follower structure is particularly very common in the coordination control of multi-agent networks. In this type of system, each agent is either a leader or a follower, where the movement of the followers is dependent on the trajectory of the leader(s).  A global objective that is of special interest is consensus, where the states of the agents are desired to converge to a common value~\cite{Jadbabaie2003,Olfati2007survey,Ren2007survey}. If the communication graph of the network is connected, then  consensus can be reached using a linear strategy. However, such strategy suffers from the curse of dimensionality, in general. In addition, the required amount of communication between agents under this type of strategy is typically high. This can lead to some practical problems, specially given that the battery consumption of each node is closely related to the amount of its required communication.  An optimal control strategy can therefore be very important for the efficient use of resources in the network. Inspired by this objective,  the authors in~\cite{Movric2014,Cao2010} study distributed linear quadratic control. Although the above techniques are effective in many cooperative control applications, they are limited by the computational cost, making them unsuitable for large-scale networks.  A large-scale network of homogeneous agents with decoupled dynamics is investigated in~\cite{Borelli2008}, for which the infinite-horizon optimal control strategy is obtained by solving two scalable coupled algebraic Riccati equations.

The present work aims to address the above shortcomings for  a leader-follower multi-agent network with a large number of followers. It is assumed that the graph representing the network is such that the average of the states of the followers and the local state of the leader are available to every agent.  In contrast to~\cite{Borelli2008},  the network considered in this paper has a leader,   the dynamics of agents are coupled, and the cost function can be  either finite-horizon or infinite-horizon.   The mean-field team approach~\cite{arabneydi2016new} is used in this paper to obtain the optimal control strategy by solving two scalable decoupled Riccati equations.

This paper is organized as follows. In Section~\ref{sec:prob}, a leader-follower multi-agent network with the mean-field information structure is formulated. The main result of the paper is developed in Section~\ref{sec:main} for the finite-horizon cost function, and is then extended to the infinite-horizon case in Section~\ref{sec:inf}.  Simulation results are provided in Section~\ref{sec:num}, and the paper is concluded in Section~\ref{sec:conc}.

\section{Problem Formulation}\label{sec:prob}

In this paper,  $\mathbb{N}$ and $\mathbb{R}$ represent natural and real numbers, respectively, and  given any $k \in  \mathbb{N}$,  the finite set of integers   $\{1,2, \ldots,k\}$  is denoted by $\mathbb{N}_k$.

Consider  a multi-agent network with one leader and  $n \in \mathbb{N}$ followers,  operating over a finite control horizon $T \in \mathbb{N}$.  Let  $x^i_t \in \mathbb{R}^{d_x}$ and $ u^i_t \in \mathbb{R}^{d_u},$  $d_x,d_u \in \mathbb{N}$, denote the state and action of follower $ i \in \mathbb{N}_n$  at time $t \in \mathbb{N}_T$, respectively.   Denote  the average  of the states of the followers  at time~$t$~by
\[\bar x_t=\frac{1}{n}\sum_{i=1}^n x^i_t.\]
Following the terminology of mean-field teams~\cite{arabneydi2016new}, we refer to the average of the states of the followers as \emph{mean-field} in the sequel.  Let  also $x^0_t \in \mathbb{R}^{d_x}$ and $ u^0_t \in \mathbb{R}^{d_u}$ denote the state and  action of the leader at time $t \in \mathbb{N}_T$, respectively.  The dynamics of the leader at time $\mathbb{N}_T$  is given~by
\begin{equation}\label{eq:dynamics-leader}
x^0_{t+1}=A^0_tx^0_t+B^0_tu^0_t+D^0_t \bar x_t+ w^0_t,
\end{equation}
where $w^0_t \in \mathbb{R}^{d_x}$ is the state noise of the leader. Similarly, the dynamics of  follower $i \in \mathbb{N}_n$ is described by
\begin{equation}\label{eq:dynamics-followers}
x^i_{t+1}=A_tx^i_t+B_tu^i_t+D_t \bar x_t+E_tx^0_t+w^i_t,
\end{equation}
where $w^i_t \in \mathbb{R}^{d_x}$ is the state noise of the $i$-th follower.  In  general,  the leader's dynamics  may depend on the states of the followers. Similarly,  the followers' dynamics  may depend on  the state of the leader as well as the states  of followers. 

At each time  $t \in \mathbb{N}_T$, the leader observes  its local state and the mean-field, i.e., 
\begin{equation}\label{eq:info-leader}
u^0_t=g^0_t(x^0_t,\bar x_t),
\end{equation}
where $g^0_t: (\mathbb{R}^{d_x})^2 \rightarrow \mathbb{R}^{d_u}$. Furthermore, each follower $i \in \mathbb{N}_n$ observes  its local state, the state of the leader, and  the mean-field,  i.e.,
\begin{equation}\label{eq:info-follower}
u^i_t=g^i_t(x^i_t, \bar x_t,x^0_t),
\end{equation}
where $g^i_t:(\mathbb{R}^{d_x})^3  \rightarrow \mathbb{R}^{d_u}$.  Under this information structure,  the privacy of each follower is preserved, i.e.,  the local state of each follower is only known to itself. Note that there are different ways to share the mean-field $ \bar x_t$ among the agents, depending on the structure of the communication graph. For example,  all agents can send their states to the leader and then the  leader computes the mean and sends it back to every follower (in which case, a link is required between the leader and every follower). Alternatively, each agent can run a consensus algorithm to compute the mean-field within the control time interval.\footnote{In practical applications, the control operation has a much longer time-scale compared to   the communication operation.
}


It is desired  that the leader and followers minimize a prescribed quadratic cost function, while achieving a global objective (such as consensus) as a group. This cost function can, for instance, reflect  the coordination error of the agents as well as the energy consumption of the actuators, collectively. To this end, consider the following optimization problem.
\begin{Problem}\label{prob}
Given the dynamics~\eqref{eq:dynamics-leader} and~\eqref{eq:dynamics-followers} as well as the information structures~\eqref{eq:info-leader} and~\eqref{eq:info-follower},  find  the optimal strategy that  minimizes the following performance index:
 \begin{multline}\label{eq:cost-function}
J_T=\mathbb{E} \Big[\sum_{t=1}^T  (x^0_t)^\intercal Q^0_t x^0_t + (u^0_t)^\intercal R^0_t u^0_t\\
+\frac{1}{n} \sum_{i=1}^n  (x^i_t)^\intercal Q_t x^i_t+( x^i_t-x^0_t)^\intercal P_t (x^i_t - x^0_t)+ (u^i_t)^\intercal R_t u^i_t\\
+\frac{1}{2n^2} \sum_{i=1}^n   \sum_{j=1}^n  ( x^i_t-x^j_t)^\intercal H_t (x^i_t - x^j_t) \Big],
\end{multline}
where matrices $Q^0_t, R^0_t,Q_t, P_t, R_t,$ and $H_t$ are symmetric.
\end{Problem}
Note that the rate of convergence of the followers to the leader is directly dependent on  matrix  $P_t$  in~\eqref{eq:cost-function}. Similarly, the movement of  the followers  as a group depends on matrix $H_t$.
\begin{Remark}
\emph{ In the special case when matrices $B^0_t$, $D^0_t$, $Q^0_t,$ and $R^0_t$ are zero, Problem~\ref{prob} becomes  the optimal control of a leaderless multi-agent system, where it is desired that the followers  track the reference signal~$x^0_t$.}
\end{Remark}


In general, Problem~\ref{prob} is difficult to solve due to its complex information structure.  Since  neither  the information structure is partially nested~\cite{ho1972team} nor the problem is  quadratic invariant~\cite{rotkowitz2006characterization},  and in addition  the noise processes are not  necessarily Gaussian, one can not assume that the optimal strategy is linear.\footnote{When the information structure is non-classical, the optimal strategy  may  not be linear~\cite{Witsenhausen1968Counterexample}.}  Moreover,  the  dimension of the augmented matrices, which are fully dense,  increases with the number of followers, i.e.,   solving Problem~\ref{prob} using existing techniques can be computationally expensive for a large number of followers.

\section{Main Result}\label{sec:main}
In this section,   the main result of this paper is presented.  It is assumed that the primitive random variables  satisfy the following standard assumption.
\begin{Assumption}\label{assump:noise}
The initial states and noise processes are mutually independent in time.
\end{Assumption}
At  any time $t \in \mathbb{N}_T$, define the following matrices:
\begin{equation}
\bar A_t:=\left[ \begin{array}{cc}
A^0_t &D^0_t\\
E_t &A_t+D_t
\end{array}  \right],  \bar B_t:=\left[ \begin{array}{cc}
B^0_t &0_{d_{x} \times d_{u}}\\
0_{d_{x} \times d_{u}} &B_t
\end{array}  \right],
\end{equation}
\begin{equation}
\bar Q_t:=\left[ \begin{array}{cc}
Q^0_t+P_t & -P_t\\
-P_t & Q_t+P_t
\end{array}  \right],  \bar R_t:=\left[ \begin{array}{cc}
R^0_t &0_{d_{u} \times d_{u}}\\
0_{d_{u} \times d_{u}} &R_t
\end{array}  \right].
\end{equation}
\begin{Assumption}\label{assump:positive}
Matrices  $Q_t +P_t +H_t$ and $\bar Q_t$  are positive semi-definite and  matrices $R^0_t$ and $ R_t$ are positive definite.
\end{Assumption}
For any $t \in \mathbb{N}_T$, define the following Riccati equation:
\begin{multline}\label{eq:R1}
\breve M_t = - A_t^\intercal   \breve M_{t+1} B_t  \left( B_t^\intercal  \breve M_{t+1}  B_t  +  R_t  \right)^{-1}  B_t^\intercal   \breve M_{t+1}  A_t\\
 + A_t^\intercal  \breve M_{t+1} A_t + Q_t+P_t+H_t,
\end{multline}
where  $\breve M_{T+1}=0_{d_{x} \times d_{x}}$. Define also the following Riccati equation: 
\begin{multline}\label{eq:R2}
\bar M_t = - \bar A_t^\intercal   \bar M_{t+1} \bar B_t  \left( \bar B_t^\intercal  \bar M_{t+1} \bar  B_t  +  \bar R_t  \right)^{-1}  \bar B_t^\intercal   \bar M_{t+1}  \bar A_t\\
 +\bar  A_t^\intercal  \bar  M_{t+1} \bar A_t +\bar Q_t,
\end{multline}
for  any  $t \in \mathbb{N}_T$, with  $\bar M_{T+1}= \left[ \begin{array}{cc}
0_{d_{x} \times d_{x}} &0_{d_{x} \times d_{x}}\\
0_{d_{x} \times d_{x}} &0_{d_{x} \times d_{x}}
\end{array}  \right]$.
\begin{Theorem}\label{thm:finite}
Let Assumptions~\ref{assump:noise} and~\ref{assump:positive} hold. The optimal strategy for Problem~\ref{prob} is linear and unique, i.e.,
  \begin{align}\label{optimal_action_leader}
    {u^0_t}&=\bar L^{1,1}_t x^0_t+  \bar L^{1,2}_t \bar x_t, \nonumber \\
    {u^i_t}&=\breve L_tx^i_t + \bar L_t^{2,1} x^0_t+ (\bar L_t^{2,2}-\breve L_t)\bar x_t,
  \end{align}
  where the gains $\{ \breve L_t, \bar L_t\}_{t=1}^{T-1}$ are obtained by solving Riccati equations~\eqref{eq:R1} and~\eqref{eq:R2} as follows: 
\begin{align}
\breve L_t&=-\left( B_t^\intercal  \breve M_{t+1}  B_t  +  R_t  \right)^{-1}  B_t^\intercal   \breve M_{t+1}  A_t,\nonumber \\
\bar L_t&=\left[ 
\begin{array}{cc}
\bar L^{1,1}_t & \bar L^{1,2}_t\\
\bar L^{2,1}_t & \bar L^{2,2}_t
\end{array}
\right]
=-\left( \bar B_t^\intercal  \bar M_{t+1}  \bar B_t  +  \bar R_t  \right)^{-1}  \bar B_t^\intercal   \bar M_{t+1}  \bar A_t.
\end{align}
\end{Theorem}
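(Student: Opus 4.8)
The plan is to convert Problem~\ref{prob} into two decoupled standard linear--quadratic problems by a change of variables, solve each by ordinary dynamic programming, and recombine. I would first introduce the deviation variables $\tilde x^i_t:=x^i_t-\bar x_t$ and $\tilde u^i_t:=u^i_t-\bar u_t$ with $\bar u_t:=\frac1n\sum_{j=1}^n u^j_t$, which obey $\sum_i\tilde x^i_t=\sum_i\tilde u^i_t=0$ by construction. Averaging \eqref{eq:dynamics-followers} over $i$ shows that the aggregate vector $z_t:=((x^0_t)^\intercal,\bar x_t^\intercal)^\intercal$, driven by $\bar v_t:=((u^0_t)^\intercal,\bar u_t^\intercal)^\intercal$, satisfies $z_{t+1}=\bar A_t z_t+\bar B_t\bar v_t+((w^0_t)^\intercal,\bar w_t^\intercal)^\intercal$ with $\bar w_t:=\frac1n\sum_i w^i_t$, while subtracting the averaged equation from \eqref{eq:dynamics-followers} gives the $n$ mutually decoupled recursions $\tilde x^i_{t+1}=A_t\tilde x^i_t+B_t\tilde u^i_t+(w^i_t-\bar w_t)$. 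Substituting $x^i_t=\bar x_t+\tilde x^i_t$ and $u^i_t=\bar u_t+\tilde u^i_t$ into \eqref{eq:cost-function}, cancelling all cross terms via $\sum_i\tilde x^i_t=\sum_i\tilde u^i_t=0$ and using the identity $\frac1{2n^2}\sum_{i,j}(\tilde x^i_t-\tilde x^j_t)^\intercal H_t(\tilde x^i_t-\tilde x^j_t)=\frac1n\sum_i(\tilde x^i_t)^\intercal H_t\tilde x^i_t$, one verifies that $J_T=\mathbb E\sum_t\big(z_t^\intercal\bar Q_tz_t+\bar v_t^\intercal\bar R_t\bar v_t\big)+\frac1n\sum_i\mathbb E\sum_t\big((\tilde x^i_t)^\intercal(Q_t+P_t+H_t)\tilde x^i_t+(\tilde u^i_t)^\intercal R_t\tilde u^i_t\big)$, with $\bar A_t,\bar B_t,\bar Q_t,\bar R_t$ as defined before the theorem.

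The delicate part, which I expect to be the main obstacle, is to show, in spite of the information structure being neither partially nested nor quadratically invariant, that there is no loss of optimality in letting $(u^0_t,\bar u_t)$ be a function of the common information $(x^0_t,\bar x_t)$ and $\tilde u^i_t$ a function of follower $i$'s computable private deviation $\tilde x^i_t$. Here I would use the mean-field team methodology of~\cite{arabneydi2016new}: treating $(x^0_t,\bar x_t)$ as the state of a fictitious coordinator and $\tilde u^i_t$ as a prescription that follower $i$ applies to $\tilde x^i_t$, the additive separation of the dynamics and the cost established above makes the coordinator's problem split into the aggregate linear--quadratic problem in $z_t$ and $n$ identical deviation linear--quadratic problems. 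Since each deviation sub-problem has linear dynamics, quadratic cost, a fully observed state, and $R_t\succ 0$, its optimal policy is linear feedback $\tilde u^i_t=\breve L_t\tilde x^i_t$ irrespective of the noise law; the essential consistency check is that linearity forces $\sum_i\tilde u^i_t=\breve L_t\sum_i\tilde x^i_t=0$, so the constraint $\bar u_t=\frac1n\sum_i u^i_t$ is respected and no coupling is reintroduced through the followers' mean action.

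It remains to solve the two resulting textbook problems. For the aggregate problem, $\bar Q_t\succeq 0$ and $\bar R_t\succ 0$ (the latter from $R^0_t,R_t\succ 0$) give, by backward induction from $\bar M_{T+1}=0$, that $\bar M_{t+1}\succeq 0$, hence $\bar B_t^\intercal\bar M_{t+1}\bar B_t+\bar R_t\succ 0$, so \eqref{eq:R2} is well posed and dynamic programming yields the unique optimal feedback $\bar v_t=\bar L_t z_t$; likewise $Q_t+P_t+H_t\succeq 0$ makes \eqref{eq:R1} well posed with optimal feedback $\tilde u^i_t=\breve L_t\tilde x^i_t$. Reading off the $2\times 2$ blocks of $\bar L_t$ gives $u^0_t=\bar L^{1,1}_tx^0_t+\bar L^{1,2}_t\bar x_t$ and $\bar u_t=\bar L^{2,1}_tx^0_t+\bar L^{2,2}_t\bar x_t$, whence $u^i_t=\bar u_t+\tilde u^i_t=\breve L_tx^i_t+\bar L^{2,1}_tx^0_t+(\bar L^{2,2}_t-\breve L_t)\bar x_t$, which is precisely~\eqref{optimal_action_leader}. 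Uniqueness follows from the strict convexity of each reduced cost in its control variable, a consequence of $R^0_t,R_t\succ 0$, together with the uniqueness of the minimizers of the two well-posed linear--quadratic problems.
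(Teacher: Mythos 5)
Your proposal is correct and follows essentially the same route as the paper: the same change of variables into deviation coordinates $\tilde x^i_t=x^i_t-\bar x_t$ and the aggregate $(x^0_t,\bar x_t)$, the same splitting of the cost using $\sum_i\tilde x^i_t=\sum_i\tilde u^i_t=0$, and the same two decoupled Riccati equations. The only notable difference is in how decentralized optimality is justified---you invoke the mean-field-team coordinator argument, while the paper solves the centralized problem and observes that the resulting optimal strategy happens to be implementable under the given information structure and is therefore optimal for the decentralized problem as well; your version is, if anything, more careful about the consistency constraint $\sum_i\tilde u^i_t=0$ and the well-posedness of the Riccati recursions, which the paper glosses over.
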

\begin{proof}
Suppose every agent knows the centralized information $\{x^0_t,x^1_t,\ldots, x^n_t\}$. Let $\bar u_t=\frac{1}{n} \sum_{i=1}^n u^i_t$ and $\bar w_t=\frac{1}{n} \sum_{i=1}^n w^i_t$. We first transform the problem by using an isomorphic transformation and  solve the transformed problem.  Then, we  use the inverse transformation to transform  the obtained solution to the solution of the original problem,  and show that the resultant (centralized) solution  is implementable under the decentralized information structure.  Define $\breve x^i_t:= x^i_t- \bar x_t$,  $\breve u^i_t:= u^i_t- \bar u_t$, and $\breve w^i_t:= w^i_t- \bar w_t$. From~\eqref{eq:dynamics-leader} and~\eqref{eq:dynamics-followers}, 
\begin{align}\label{eq:dynamics-breve}
\breve x^i_{t+1}&=A_t \breve x^i_t+ B_t \breve u^i_t+\breve w^i_t,  \nonumber \\
 \left[\begin{array}{c}
x^0_{t+1}\\
\bar x_{t+1}
\end{array}\right]&=\bar A_t  \left[\begin{array}{c}
x^0_t\\
\bar x_t
\end{array}\right]+\bar B_t  \left[\begin{array}{c}
u^0_t\\
\bar u_t
\end{array}\right]+  \left[\begin{array}{c}
w^0_t\\
\bar w_t
\end{array}\right].
\end{align}
Rewrite the cost function $J_T$, given by ~\eqref{eq:cost-function},  in terms of the new variables as follows:
\begin{multline}\label{eq:proof_cost-1}
 \hspace{-.4cm} J_T\hspace{-.1cm}=\hspace{-.1cm}\mathbb{E} \Big[ \hspace{-.05cm}\sum_{t=1}^T  (x^0_t)^\intercal Q^0_t x^0_t + (u^0_t)^\intercal R^0_t u^0_t
+\frac{1}{n}  \hspace{-.1cm} \sum_{i=1}^n  (\breve x^i_t + \bar x_t)^\intercal Q_t (\breve x^i_t+ \bar x_t) \\
+\frac{1}{n}\sum_{i=1}^n (\breve  x^i_t + \bar x_t-x^0_t)^\intercal P_t (\breve x^i_t+ \bar x_t - x^0_t)+ (\breve u^i_t+\bar u_t)^\intercal R_t (\breve u^i_t+\bar u_t)\\
+\frac{1}{2n^2} \sum_{i=1}^n   \sum_{j=1}^n  ( \breve x^i_t-\breve x^j_t)^\intercal H_t (\breve x^i_t - \breve x^j_t) \Big].
\end{multline}
The above  relation can  be simplified by exploiting the fact that $\frac{1}{n}\sum_{i=1}^n \breve{x}^i_t=0_{d_x \times 1}$ and $\frac{1}{n}\sum_{i=1}^n \breve{u}^i_t=0_{d_u \times 1}$. This leads to the following simplified equation:
\begin{align}\label{eq:proof-cost}
J_T=\mathbb{E} \Big[\sum_{t=1}^T  \left[\begin{array}{c}
x^0_t\\
\bar x_t
\end{array}\right]^\intercal \bar Q_t \left[\begin{array}{c}
x^0_t\\
\bar x_t
\end{array}\right]+  \left[\begin{array}{c}
u^0_t\\
\bar u_t
\end{array}\right]^\intercal \bar R_t  \left[\begin{array}{c}
u^0_t\\
\bar u_t
\end{array}\right] \nonumber \\
+\frac{1}{n}\sum_{i=1}^n (\breve  x^i_t)^\intercal (Q_t+P_t+H_t) (\breve x^i_t)+ (\breve u^i_t)^\intercal R_t (\breve u^i_t) \Big].
\end{align}
The cost function $J_T$ in~\eqref{eq:proof-cost} is  the  sum of the cost functions of~$n$ systems with state and action $(\breve x^i_t, \breve u^i_t)$, $i \in \mathbb{N}_n,$ and  one system with the state and action $(\{x^0_t, \bar x_t\}, \{u^0_t, \bar u_t\})$. These $n+1$ systems are decoupled  due to the certainty equivalence theorem~\cite{caines1987linear}. As a result,   $J_T$  is minimized when  the cost functions of the  $n+1$  systems with decoupled dynamics and cost are minimized, i.e., 
\begin{equation}
\left[\begin{array}{c}
u^0_t\\
\bar u_t
\end{array}\right]= \bar L_t \left[\begin{array}{c}
x^0_t\\
\bar x_t
\end{array}\right] \quad \text{and}\quad \breve u^i_t=\breve L_t \breve x^i_t, \quad i \in \mathbb{N}_n.
\end{equation}
We  now transform the above solution to the solution of the  original problem: 
\begin{equation}\label{eq:obtained-solution}
u^i_t=\breve u^i_t+\bar u_t=\breve L_t (x^i_t - \bar x_t)+ [\bar L^{2,1}_t \quad \bar L^{2,2}_t] \left[\begin{array}{c}
x^0_t\\
\bar x_t
\end{array}\right].
\end{equation}
The transformed solution is optimal for the original decentralized problem because it is  implementable under the  information structure in Section~\ref{sec:prob}.
\end{proof}
According to Theorem~\ref{thm:finite}, the leader must solve the Riccati equation~\eqref{eq:R2} to determine  its optimal strategy ($\bar L^{1,1}_t$, $\bar L^{1,2}_t$) whereas  each follower must solve   the Riccati equations~\eqref{eq:R1} and \eqref{eq:R2} to find its optimal strategy ($\breve L_t, \bar L^{2,1}_t,$ $\bar L^{2,2}_t$): one for the local adjustment with their average (i.e., mean-field) and one for the global adjustment with the leader.
\begin{Corollary}\label{cor:finite}
Let matrices $\breve  L_t$ and $\bar L^{1,1}_t$ in Theorem~\ref{thm:finite} be invertible. The optimal strategy  can  be rewritten in the form of the solution of a standard consensus problem as follows:
  \begin{align}
    {u^0_t}&=  \sum_{i=1}^n \alpha_t(x^0_t- \beta_t x^i_t), \nonumber \\
    {u^i_t}&= \sum_{j=1}^n  \gamma_t(x^i_t- \mu_t x^j_t) + \sum_{i=1}^n   \lambda_t (x^0_t- x^i_t),
  \end{align}
  where $\alpha_t=\frac{1}{n} \bar L^{1,1}_t$, $\beta_t=-(\bar{L}^{1,1}_t)^{-1} \bar L^{1,2}_t, \gamma_t=\frac{1}{n}\breve L_t, \mu_t=-(\breve L_t)^{-1}(\bar L^{2,2}_t+ \bar L^{2,1}_t- \breve L_t),$ and $\lambda_t= \frac{1}{n} \bar L^{2,1}_t$.
\end{Corollary}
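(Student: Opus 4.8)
The plan is to establish the corollary by a direct algebraic verification: the two representations describe the \emph{same} affine map of $(x^0_t,x^1_t,\dots,x^n_t)$, so it suffices to collapse the consensus-form sums via the identity $\bar x_t=\frac1n\sum_{i=1}^n x^i_t$ and then match coefficients against the optimal law \eqref{optimal_action_leader} of Theorem~\ref{thm:finite}. No further optimality argument is required.

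First I would handle the leader. Substituting $\sum_{i=1}^n x^i_t=n\bar x_t$,
\[
\sum_{i=1}^n \alpha_t\bigl(x^0_t-\beta_t x^i_t\bigr)=n\alpha_t\,x^0_t-n\alpha_t\beta_t\,\bar x_t .
\]
Equating this with $u^0_t=\bar L^{1,1}_t x^0_t+\bar L^{1,2}_t\bar x_t$ identifies $n\alpha_t=\bar L^{1,1}_t$ and $-n\alpha_t\beta_t=\bar L^{1,2}_t$. The first equation gives $\alpha_t=\frac1n\bar L^{1,1}_t$; inserting it into the second and using the invertibility of $\bar L^{1,1}_t$ yields $\beta_t=-(\bar L^{1,1}_t)^{-1}\bar L^{1,2}_t$, as claimed.

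Next I would treat a generic follower $i$. Reading the second sum in the stated follower law with a dummy index distinct from $i$, and again using $\sum_j x^j_t=n\bar x_t$,
\[
\sum_{j=1}^n \gamma_t\bigl(x^i_t-\mu_t x^j_t\bigr)+\sum_{j=1}^n \lambda_t\bigl(x^0_t-x^j_t\bigr)=n\gamma_t\,x^i_t+n\lambda_t\,x^0_t-n(\gamma_t\mu_t+\lambda_t)\,\bar x_t .
\]
Matching with $u^i_t=\breve L_t x^i_t+\bar L^{2,1}_t x^0_t+(\bar L^{2,2}_t-\breve L_t)\bar x_t$ gives $n\gamma_t=\breve L_t$, $n\lambda_t=\bar L^{2,1}_t$, and $-n(\gamma_t\mu_t+\lambda_t)=\bar L^{2,2}_t-\breve L_t$. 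The first two yield $\gamma_t=\frac1n\breve L_t$ and $\lambda_t=\frac1n\bar L^{2,1}_t$; substituting these into the third and using the invertibility of $\breve L_t$ produces $\mu_t=-(\breve L_t)^{-1}(\bar L^{2,2}_t+\bar L^{2,1}_t-\breve L_t)$.

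I do not anticipate a genuine obstacle: the argument is pure coefficient matching, and the only points needing care are bookkeeping ones — keeping the summation index of the second follower sum separate from the follower label $i$, and noting that the invertibility hypotheses on $\bar L^{1,1}_t$ and $\breve L_t$ are each used exactly once, precisely to isolate $\beta_t$ and $\mu_t$. If preferred, the computation can be run in the opposite direction, starting from the consensus form with the prescribed gains and collapsing the sums to recover \eqref{optimal_action_leader} verbatim; presenting it that way makes the equivalence immediate.
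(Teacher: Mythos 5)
Your verification is correct and is precisely the coefficient-matching argument the paper leaves implicit (no proof of Corollary~\ref{cor:finite} is given, as it follows immediately from Theorem~\ref{thm:finite} and $\sum_{i=1}^n x^i_t = n\bar x_t$). Your reading of the second follower sum with a dummy index distinct from the follower label $i$ is the right interpretation of what is evidently a typo in the corollary's statement, and the invertibility hypotheses are used exactly where you say.
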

\begin{Remark}
\emph{ Corollary~\ref{cor:finite} provides the optimal information flow topology between the leader and followers for the case when such a topology is not pre-specified.}
\end{Remark}
\section{Infinite Horizon}\label{sec:inf}
In this section,  the result  of Theorem~\ref{thm:finite} is extended to the case of infinite horizon.  To this end, it is assumed that  the dynamics of the agents as well as  the cost function are time-homogeneous; therefore,  the subscript $t$ is omitted to simplify the notation. Given $\beta \in (0,1]$, define
\begin{multline}
J_\infty= \mathbb{E} \Bigg[\sum_{t=1}^\infty \beta^{t-1}\Big( (x^0_t)^\intercal Q^0 x^0_t + (u^0_t)^\intercal R^0 u^0_t\\
+\frac{1}{n} \sum_{i=1}^n  (x^i_t)^\intercal Q x^i_t+( x^i_t-x^0_t)^\intercal P (x^i_t - x^0_t)+ (u^i_t)^\intercal R u^i_t\\
+\frac{1}{2n^2} \sum_{i=1}^n   \sum_{j=1}^n  ( x^i_t-x^j_t)^\intercal H (x^i_t - x^j_t) \Big)\Bigg].
\end{multline}
When $\beta <1$, $J_\infty$  is called infinite-horizon discounted cost and when $\beta=1$, it is called infinite horizon undiscounted cost. The following standard assumption is imposed  on the model.
\begin{Assumption}\label{assump:inf}
Let $(\sqrt{\beta} A,\sqrt{\beta}B)$ and $(\sqrt{\beta}\bar A,\sqrt{\beta}\bar B)$  be stabilizable and $(\sqrt{\beta}A, Q^{1/2})$  and $(\sqrt{\beta}\bar A, \bar Q^{1/2})$ be detectable.
\end{Assumption}
Define the following two algebraic Riccati equations:
\begin{multline}\label{eq:R3}
\breve M = - \beta A^\intercal   \breve M  B  \left( B^\intercal  \breve M  B  + \beta^{-1} R  \right)^{-1}  B^\intercal   \breve M A\\
 + \beta A^\intercal  \breve M A + Q+P+H, 
\end{multline}
\begin{multline}\label{eq:R4}
\bar M = - \beta \bar A^\intercal   \bar M \bar B \left( \bar B^\intercal  \bar M \bar  B  + \beta^{-1} \bar R  \right)^{-1}  \bar B^\intercal   \bar M  \bar A\\
 +\beta \bar  A^\intercal  \bar  M \bar A +\bar Q. 
\end{multline}
\begin{Theorem}\label{thm:inf}
Let  Assumptions~\ref{assump:noise},~\ref{assump:positive} and~\ref{assump:inf} hold. Then,  the optimal  control strategy is given by:
  \begin{align}\label{optimal_action_leader-inf}
    {u^0_t}&=\bar L^{1,1} x^0_t+  \bar L^{1,2}  \bar x_t,\\
    {u^i_t}&=\breve L x^i_t + \bar L^{2,1} x^0_t+ (\bar L^{2,2}-\breve L)\bar x_t,
  \end{align}
  where the gains $\{ \breve L, \bar L\}$ are obtained by  solving  algebraic Riccati equations~\eqref{eq:R3} and~\eqref{eq:R4}  as follows: 
\begin{align}
\breve L&=-\left( B^\intercal  \breve M  B  +  \beta^{-1}R \right)^{-1}  B^\intercal   \breve M  A, \nonumber\\
\bar L
&=\left[ 
\begin{array}{cc}
\bar L^{1,1} & \bar L^{1,2}\\
\bar L^{2,1} & \bar L^{2,2}
\end{array}
\right]=-\left( \bar B^\intercal  \bar M  \bar B + \beta^{-1} \bar R  \right)^{-1}  \bar B^\intercal   \bar M  \bar A.
\end{align}
\end{Theorem}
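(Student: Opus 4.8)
The plan is to reduce the infinite-horizon problem to Theorem~\ref{thm:finite} and then pass to the limit $T\to\infty$. As in the proof of Theorem~\ref{thm:finite}, I would first suppose that every agent has access to the centralized information and apply the isomorphic change of variables $\breve x^i_t=x^i_t-\bar x_t$, $\breve u^i_t=u^i_t-\bar u_t$, $\breve w^i_t=w^i_t-\bar w_t$. Using $\tfrac1n\sum_i\breve x^i_t=0$ and $\tfrac1n\sum_i\breve u^i_t=0$, the algebra leading to~\eqref{eq:proof_cost-1}--\eqref{eq:proof-cost} carries over verbatim and shows that $J_\infty$ decouples into $n$ identical discounted subproblems with data $(A,B,Q+P+H,R)$ and one aggregate discounted subproblem with data $(\bar A,\bar B,\bar Q,\bar R)$, all sharing the discount factor $\beta$. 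Absorbing $\sqrt\beta$ into the dynamics (the standard rescaling $\beta^{(t-1)/2}(\cdot)$, which sends $(A,B)$ to $(\sqrt\beta A,\sqrt\beta B)$ and $(\bar A,\bar B)$ to $(\sqrt\beta\bar A,\sqrt\beta\bar B)$) turns each subproblem into an undiscounted one whose state weight is unchanged and whose control weight acquires the factor $\beta^{-1}$ appearing in~\eqref{eq:R3} and~\eqref{eq:R4}.

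Second, I would invoke standard infinite-horizon LQR theory together with certainty equivalence. Under Assumption~\ref{assump:inf} the pairs $(\sqrt\beta A,\sqrt\beta B)$ and $(\sqrt\beta\bar A,\sqrt\beta\bar B)$ are stabilizable and $(\sqrt\beta A,Q^{1/2})$, $(\sqrt\beta\bar A,\bar Q^{1/2})$ are detectable, so the finite-horizon Riccati recursions~\eqref{eq:R1}--\eqref{eq:R2} for the time-homogeneous $\beta$-rescaled data converge (monotonically) to the unique positive semi-definite stabilizing solutions $\breve M$, $\bar M$ of~\eqref{eq:R3},~\eqref{eq:R4}, and the associated gains satisfy $\breve L_t\to\breve L$, $\bar L_t\to\bar L$. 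The limiting gains are stabilizing, hence each decoupled closed loop is $\beta$-stable and, by certainty equivalence, the stationary linear law attains the infimum of the corresponding per-subproblem cost; summing over the $n+1$ decoupled subproblems, it attains $\inf J_\infty$. Because $(\breve x^i_t,\breve u^i_t)$ and $(\{x^0_t,\bar x_t\},\{u^0_t,\bar u_t\})$ are the same linear functions of the original variables as in Theorem~\ref{thm:finite}, the inverse transformation $u^0_t=\bar L^{1,1}x^0_t+\bar L^{1,2}\bar x_t$ and $u^i_t=\breve L(x^i_t-\bar x_t)+\bar L^{2,1}x^0_t+\bar L^{2,2}\bar x_t$ is implementable under the mean-field information structure of Section~\ref{sec:prob}, hence optimal for the original decentralized problem; uniqueness follows from strict convexity of each subproblem, guaranteed by the positive definiteness of $R^0$, $R$ and the detectability conditions.

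The step I expect to be the main obstacle is this limiting argument, in two respects. First, the detectability hypotheses of Assumption~\ref{assump:inf} must be transferred to the two decoupled systems: for the follower-deviation subproblem one needs detectability of $(\sqrt\beta A,(Q+P+H)^{1/2})$, which follows from detectability of $(\sqrt\beta A,Q^{1/2})$ because $Q+P+H\succeq Q$ under Assumption~\ref{assump:positive}. Second, the undiscounted case $\beta=1$ needs care, since the persistent additive noise makes $J_\infty$ divergent; there ``minimizing $J_\infty$'' should be read in the long-run average-cost sense (equivalently, in terms of the relative value function), and one must check that the same converged Riccati solutions and stationary gains remain optimal under the stabilizability/detectability hypotheses. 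Once these points are in place, the remainder of the argument is a routine repetition of the computations in the proof of Theorem~\ref{thm:finite}.
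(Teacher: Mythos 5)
Your proposal is correct and follows essentially the same route as the paper: the $\sqrt\beta$ rescaling that converts the discounted problem to an undiscounted one, the mean-field/deviation isomorphic transformation yielding $n+1$ decoupled LQ subproblems, and an appeal to standard infinite-horizon theory under the stabilizability/detectability hypotheses (the paper performs the rescaling before the decoupling, but the two steps commute). You supply more detail than the paper's two-sentence sketch---notably the convergence of the Riccati recursions, the detectability of the deviation subproblem, and the average-cost reading of the undiscounted case---but these are elaborations of, not departures from, the paper's argument.
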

\begin{proof}
By a simple change of variables,  an  infinite-horizon discounted cost  problem with the 4-tuple
$(A,B,Q,R)$ and  discount factor $\beta$  can be transformed to an infinite-horizon undiscounted cost problem with 4-tuple $(\sqrt{\beta} A, \sqrt{\beta} B, Q, R)$. By applying the same isomorphic transformation as in the proof of Theorem~\ref{thm:finite}  on  the resultant  undiscounted formulation, and using  a similar argument as in that proof,  the $n+1$ decoupled systems are obtained.
\end{proof}

\section{Numerical Example}\label{sec:num}
\textbf{Example 1.} In this section, we present an example of a multi-agent system with a leader and  $100$ followers to verify our theoretical results. Let  the initial state of the leader be $30$, i.e., $x^0_1=30$, and the initial states of the followers be  uniformly distributed random variables in the interval $[0, 20]$. Let also the dynamics of the agents be driven by~\eqref{eq:dynamics-leader} and~\eqref{eq:dynamics-followers} with the following scalar parameters:
$A^0_t=1,  B^0_t=0.3,   A_t=1,   B_t=0.2,  
D^0_t=0.05, D_t=0.01,  E_t=0.01,$
and noises:
\begin{equation}
w^0_t \sim \mathcal{N}(0,0.1), \quad w^i_t \sim \mathcal{N}(0,0.2), \quad \forall i \in \mathbb{N}_n.
\end{equation}
Consider the cost function~\eqref{eq:cost-function} with the following parameters:
\begin{equation}
 Q^0_t=1,  R^0_t=100, Q_t=0.1,   P_t=50,  R_t=50,  H_t=1.
\end{equation}
Assume first that $T=80$ (the finite-horizon case). Using Theorem~\ref{thm:finite},   the optimal trajectories of the leader and followers shown in Figure~\ref{fig} are obtained. Figure~\ref{fig} shows that the states of the followers (thin  colored curves) converge to a small neighborhood of the state  of the leader (thick black curve). The size of this neighborhood depends, in fact, on the noise variance. In the special case when there is no noise, all followers' states approach the state of the leader.

Assume now that $T=\infty$ (the infinite-horizon case with undiscounted cost). Using Theorem~\ref{thm:inf} in this case, the results demonstrated in Figure~\ref{fig2} are obtained, analogously to Figure~\ref{fig}.  Figure~\ref{fig2} shows good convergence results for the followers in the presence of noise.
\begin{figure}[h!]
\hspace{-1cm}
\scalebox{1.2}{
\includegraphics[width=\linewidth]{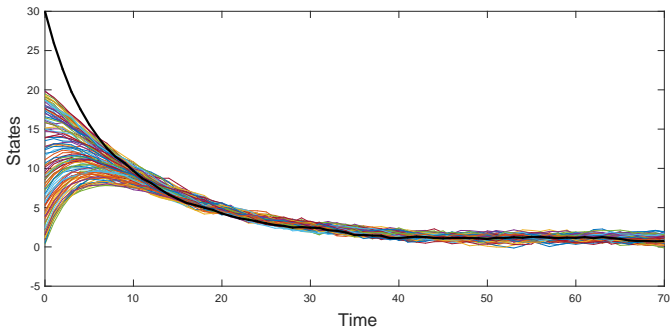} }
\caption{The trajectories of the leader and followers in Example 1 for the finite-horizon case.}\label{fig}
\vspace{-.4cm}
\end{figure}
\begin{figure}[h!]
\hspace{-.6cm}
\scalebox{1.1}{
\includegraphics[width=\linewidth]{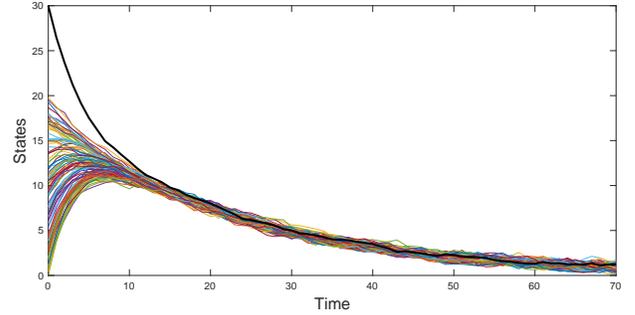} }
\caption{The trajectories of the leader and followers in Example 1 for the infinite-horizon case.}
\label{fig2}
\vspace{-.2cm}
\end{figure}
\section{Conclusions}\label{sec:conc}
In this paper,  optimal  distributed control  of a multi-agent system with a leader-follower structure is investigated. It is assumed that the average of the states of the followers  and the local state of the leader are available to every agent.  The optimal solution is obtained  by solving two decoupled Riccati equations whose computational complexities are independent of the number of followers. In the infinite-horizon case, the Riccati difference equations become algebraic Riccati equations. As an interesting future work, one can consider the case where  the number of followers is sufficiently  large.  In this case,  the average of the states of followers  can be efficiently approximated by the law of large numbers, which means that  the only information to  be shared is the local state of the leader. This implies that  the  communication topology of the network is described by  a directed graph with paths from  the leader to  followers.

\bibliographystyle{IEEEtran}

\bibliography{MFT_Ref} 

\end{document}